\def\ZZ{\mathbb{Z}}
\def\QQ{\mathbb{Q}}
\def\RR{\mathbb{R}}
\def\Rb{\mathbb{R}_{+}}
\def\CC{\mathbb{C}}
\def\KK{\mathrm{K}}
\def\h{\mathrm{h}}
\def\Ok{\mathfrak{o}}
\def\R{\mathrm{R}}
\def\D{\mathrm{D}}
\def\w{\omega}
\def\Res{\kappa}
\def\fa{\mathfrak{a}}
\def\fb{\mathfrak{b}}
\def\fd{\mathfrak{d}}
\def\fp{\mathfrak{p}}
\def\fn{\mathfrak{n}}
\def\Zk{\zeta_{\KK}}
\def\Xk{\xi_{\KK}}
\def\phik{\varphi_{\KK}}
\def\muk{\mu_{\KK}}
\def\Nuk{\nu_{\KK}}
\def\pk{\phi_{\KK}}
\def\M{\mathcal{M}}
\newcommand{\T}{\mathrm{T}}
	\newcommand{\abs}[1]{\left\vert#1\right\vert}
	\newcommand{\norm}[1]{\left\Vert#1\right\Vert}
	\newcommand{\floor}[1]{\lfloor#1\rfloor}
   \newcommand{\NN}{\mathfrak{N}}
\newtheorem{theorem}{Theorem}[section]
\newtheorem*{theorem*}{Theorem}
\newtheorem{lemma}[theorem]{Lemma}
\newtheorem{proposition}[theorem]{Proposition}
\newtheorem{remark}[theorem]{Remark}
\title{Discrete  Measures  and  the Extended Riemann Hypothesis}
\author{Estala--Arias, Samuel \\
  \small Divisi\'on de Posgrado, Facultad de Ingenier\'ia, Universidad Aut\'onoma de  Quer\'etaro,\\
  \small Cerro de las Campanas S/N, Las Campanas, \\
  \small 76010 Santiago de Quer\'etaro, Qro., M\'exico \\
  \small e--mail: samuel.estala@uaq.mx
  }
\date{\today}
\begin{document}

\maketitle

\begin{abstract}  In this work we 
show that the  Riemann hypothesis for  the
  Dedekind zeta--function  $\Zk(s)$ of  an algebraic number field $\KK$ is equivalent to a problem of   the rate of convergence  of  certain discrete measures defined arithmetically on  the multiplicative group of  positive real numbers to the measure $\Zk(2)^{-1}\Res q dq $, where $\Res$ denotes the residue of $\Zk(s)$ at $s=1$ and $dq$   the  Lebesgue measure. 
\end{abstract}

{\bf Keywords:}  Dedekind zeta--function, Extended Riemann hypothesis, Extended Lindel\"of Hypothesis, Euler totient's function.

{\bf 
Mathematical subject classification:} 
11M26, 11N37, 11R42.
\section{Introduction}

Let $\varphi(n)=n \prod_{p \mid n }( 1-\frac{1}{p})$ be the Euler's totient function, which counts the cardinality of the group of units of the ring $\ZZ/n\ZZ$, as well as
the number of positive  integers which are   lesser or equal to $n$, and relatively prime to $n$. 
The average of this arithmetic function is given by a well--known result due to Mertens (\cite{Mer}):
$$\Phi(x)=\sum_{n\leq x} \varphi(n)=  \frac{3}{\pi^2}x^2+E(x)= \frac{3}{\pi^2}x^2+O(x \log x).
$$ The logarithmic term of this result has  subsequently  been enhanced  by many authors (see e.g. \cite{NV} and the references therein).  On the other hand, the connection of $\Phi(x)$ and its asymptotic behavior    to the Riemann zeta--function has been inaugurated in work of  Franel and  Landau  on Farey sequences (\cite{Fr, EL}).

Let $\Rb= \{ q \in \RR \mid q >0 \}$ be  the multiplicative group of  positive real numbers. Recall that as a topological Abelian group, $\Rb$ has   a Haar measure $\frac{dq}{q},
$ where $dq$ is the Lebesgue measure on $\Rb$. The measure  $m=\zeta(2)^{-1}qdq=\frac{6}{\pi^2}q dq$, where $\zeta(s)$ denotes the Riemann zeta--function,  is absolutely continuous with respect to  Haar measure.  For each $q \in \Rb$, define the infinite measure, $m_q: C_c^{0}(\Rb)\rightarrow \CC$, by the formula  
$$
m_q(f) =\sum_{n=1}^{\infty} q \varphi(n)f(q^{\frac{1}{2}}n). 
$$ 
Let $\ell\geq 0$ be an integer or infinity.
We denote by $C_c^{\ell}(\Rb)$  the set of complex valued functions defined on $\Rb$ of class $C^\ell$ with compact support and  also by $C(\Rb)$  the set of continuous complex valued functions on $\Rb$.

We have the following   outstanding results  due to A. Verjovsky (cf.  \cite{V2}): from Merten's theorem,  for any $f \in C_c^{0}(\Rb)$, we have $$m_q(f) =m(f)+E_f(q)=m(f)+O(q^{1/2} \log q ),$$ as  $q \rightarrow 0$,  which implies that the measures $m_q$  converge vaguely to  $m$  as $q  \rightarrow 0$. In addition, 
if $f \in C_c^{2}(\Rb)$, then $E_f(q)=o(q^{\frac{1}{2}})$, and 
the  Riemann hypothesis holds if and only if for every $f \in C_c^{2}(\Rb)$  one has $E_f(q)=o(q^{3/4-\epsilon})$,  for all $0<\epsilon<1/4$ ($q \rightarrow 0$). 
Furthermore, if $f$ is the characteristic function of an interval then the exponent $1/2$ of $q$ in  the error term is optimal, i.e. for any $\alpha>\frac{1}{2}$,  the value of $q^{\alpha}E_f(q)$ is not bounded as $q$ goes to zero. Last but not least,  there exists  a continuous function $F$ of bounded support  such that  the exponent $1/2$ of $q$ in the error term $E_F$ is optimal (in the sense above) if and only if the Riemann hypothesis  is false in the strongest possible sense: there exist zeroes of Riemann's zeta--function  arbitrarly close to the critical line $\Re(s)=1$.

It is worth noticing that these outcomes  do not disproof the Riemann hypothesis because characteristic functions are   not even continuous. 
However, these results put in evidence the fact that the Riemann hypothesis is also a regularity
problem: the Riemann-Lebesgue theorem for any function given by the restriction of     $\zeta(s-1)/s\zeta(s)$  to vertical lines on the critical strip (which  are
not  necessarily in $L^1$)  implies that the Riemann hypothesis is false.

The measures $m_q$ and their connection to the Riemann hypothesis were discovered by Verjovsky in the beautiful article \cite{V} (see also \cite{CP} and \cite{NV}), as a consequence of studying geometrically the work of Don Zagier \cite{DZ} and P. Sarnak \cite{Sa}, which respectively relate the distribution of the long closed horocycles, in the classical modular orbifold and, respectively, its unit tangent bundle, to the Riemann hypothesis.

The purpose of this article is to investigate  Verjovsky's results   in  the case of a general algebraic  number field $\KK$ of degree $n=[\KK:\QQ]>1$. It may be noted  that in \cite{Sam}, the author set up a generalization of  Zagier's criterion for an algebraic number field $\KK$ and  show that the Riemann hypothesis for the Dedekind zeta--function $\Zk(s)$ is equivalent to   a problem of the rate of convergence of   certain generalized horocycle measures on the Hilbert modular orbifold of the field $\KK$ to its normalized Haar measure. In addition, the connection of the uniform  distribution of the long closed    horospheres on   Bianchi modular orbifolds with the measures studied here has already been the subject of many works (see the recent survey \cite{ParkPau} and the references therein).

Let us  state  our conclusions. Let $\KK$ be an algebraic number field  of degree  $n>1$,   over $\QQ$.  Denote  by $\Ok = \Ok_K$, the ring of integers of $\KK$. We usually denote by the letters $\fa,\fb,\ldots$ integral ideals of $\Ok$ and by $\fp$ a prime integral ideal.  Let  $\Zk(s)$  the Dedekind zeta--function of the algebraic number field $\KK$ and by $\Res=\Res_\KK$, the residue of  $\Zk(s)$ at $s=1$. The Extended Riemann hypothesis for $\Zk(s)$ states that all of its   non trivial zeros are in the line $\Re(s)=\frac{1}{2}$. 
The Extended Lindel\"of  hypothesis   states that $\Zk(\frac{1}{2}+it)=O(t^\epsilon)$ for  all $\epsilon >0$. The Extended Riemann hypothesis for $\Zk(s)$ implies the Extended Lindel\"of hypothesis for $\Zk(s)$.

For each integral ideal $\fn$ define the Euler's totient function  of the field $\KK$, $\varphi_{\KK}(\fn)$, as the cardinality of the group of invertible elements of $\Ok/\fn$, i.e.
$$\varphi_{\KK}(\fn)=\abs{(\Ok/\fn)^{*}}=\NN(\fn) \prod_{\fp \mid \fn }( 1-\frac{1}{\NN(\fp)}),$$
where
$$ \NN(\fn) :=\left [ \Ok: \fn \right ]=\left|\Ok/\fn\right|$$
is  the ideal norm of the integral ideal $\fn$.
Notice that $\varphi_{\KK}(\fn)$ counts the number of principal integral ideals  which are  relatively prime to $\fn$, and whose ideal norm are lesser or equal to the ideal norm of  $\fn$.  For each $q \in \Rb$, define the infinite measure, $m_q(f):C_c^0(\Rb) \rightarrow \CC$ by the formula  
\begin{equation}\label{measures}
m_q(f) =\sum_{\substack{ \fn \subset \Ok \\ }} q \phik(\fn)f (q^{\frac{1}{2}}\NN(\fn)), 
\end{equation} where the sum is taken over the set of all integral ideals of $\Ok$.
Additionally, let $m$  be the   measure given by 
$m=\frac{\Res}{\Zk(2)}q dq$.

Denote by $\floor{x}$ the floor function on $\RR$. The following results encompass our achievements.

\begin{theorem} \label{thm1} 

\begin{enumerate}[label=(\Alph*)]
\item For all $f \in C_c^0(\Rb)$ we have
$$m_q(f)=m(f)+O(q^{\frac{1}{2n}}\log q) , \qquad(q \rightarrow 0).$$ 
Moreover, if the  Lindel\"of hypothesis for the Dedekind zeta--function $\Zk(s)$ holds, then for all $f \in C_c^0(\Rb)$ we have
$$m_q(f)=m(f)+O(q^{1/4-\epsilon}\log q)  \qquad(q \rightarrow 0)$$
for all $0<\epsilon<1/4$. Furthermore, if the hypothesis on the generalized circle problem for $\KK$ holds, i.e. the number of integral ideals with norm less than $x$ is equal to $\Res x+O(x^{\frac{1}{2}-\frac{1}{2n}+\epsilon})$ (for all $0<\epsilon$), then
$$m_q(f)=m(f)+O(q^{\frac{1}{4}+\frac{1}{2n}-\epsilon}\log q),  \qquad(q \rightarrow 0),$$
for all $0<\epsilon<\frac{1}{4}+\frac{1}{4n}$.

\item  If  $f \in C_c^\ell(\Rb)$ with $ \floor{\frac{n}{2}}+2 \leq \ell \leq \infty $, then
$$m_q(f)=m(f)+o(q^{\frac{1}{2}}),  \qquad(q \rightarrow 0).$$

\item
The  Riemann hypothesis for the Dedekind zeta--function $\Zk(s)$ holds if and only if for every    function $f \in C_c^{\ell}(\Rb)$ with $ n+1 \leq \ell \leq \infty $, one has 
\begin{equation}\label{distribucionideal}
m_q(f)=m(f)+o(q^{3/4-\epsilon}),  \qquad (q \rightarrow 0 ) \notag
\end{equation}
for all $0<\epsilon<1/4$. Furthermore, if $\alpha \in (1/2, 3/4)$ is such that, for all functions $f\in C^{\ell}_c(\Rb^r)$ with $n+1 \leq \ell \leq \infty $, one has

\begin{equation}\label{optimo}
m_q(f)=m(f)+o(q^{\alpha-\epsilon}),  \qquad (q\rightarrow 0),
\end{equation}
for all $0<\epsilon<1-\alpha$,
then the Dedekind zeta--function has no zeroes in the half--plane $\Re(s)>2(1-\alpha)$. Contrariwise, if the Dedekind zeta--function has no zeroes in the half--plane $\Re(s)>2(1-\alpha)$, then,  for all functions  $f\in C^{\ell}_c(\Rb)$ with $n+1 \leq \ell \leq \infty $, equation (\ref{optimo}) holds for all $0 <\epsilon < 1-\alpha$.

\item If $f$ is the characteristic function of an interval, then
$$\limsup_{q \to 0} q^{-\alpha}\abs{ m_q(f)-m(f)}=\infty, \qquad ( \text{if } \alpha > 1/2). $$

\item Let the function $F \in C(\Rb)$, with support in $(0,1]$, be defined by
$$
F(t)=\begin{cases} (1-t)^{\floor{\frac{n}{2}}+1}& \text{for } \leq 1,\\
      0 & \text{for } t > 1.
     \end{cases}
$$
Then:

$$
m_q(F)=m(F)+o(q^{1/2})  \qquad (q\rightarrow 0),$$
and,
$$\limsup_{q \to 0} q^{-\alpha}\abs{ m_q(F)-m(F)}=\infty \qquad ( \text{for all } \alpha > 1/2) $$
if and only if the Riemann hypothesis for the Dedekind zeta--function is false in the strongest possible sense: there exist zeroes of Dedekind's zeta--function
arbitrarily close to the critical line $\Re(s)=1$.

\item Let the function $F \in C(\Rb)$, with support in $(0,1]$, be defined by
$$
F(t)=\begin{cases} (1-t)^n& \text{for } \leq 1,\\
      0 & \text{for } t > 1.
     \end{cases}
$$
Then:
$$
m_q(F)=m(F)+o(q^{1/2}),  \qquad (q\rightarrow 0),$$
and,
$$\limsup_{q \to 0} q^{-\alpha}\abs{ m_q(F)-m(F)}=\infty$$
for all $ \alpha > 1/2+\theta$ with $0 \leq \theta < \frac{1}{4} $, if and only if the Riemann hypothesis for the Dedekind zeta--function is false in the sense that there  exist zeroes of Dedekind's zeta--function
arbitrary close to the  line $\Re(s)=1-2\theta$. 
\end{enumerate}

\end{theorem}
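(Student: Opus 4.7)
The plan is to realize $\Delta(q)\equiv m_q(F)-m(F)$ as a Mellin--Perron contour integral and then analyze it by contour shifts, in the spirit of the preceding parts of the theorem. Combining the Dirichlet series identity $\sum_{\fn\subset\Ok}\phik(\fn)/\NM(\fn)^s=\Zk(s-1)/\Zk(s)$ (valid for $\Re(s)>2$) with the Beta-function Mellin transform $\int_0^1 t^{s-1}(1-t)^n\,dt=B(s,n+1)=n!/[s(s+1)\cdots(s+n)]$, Mellin inversion yields
$$m_q(F)=\frac{1}{2\pi i}\int_{(c)} B(s,n+1)\,\frac{\Zk(s-1)}{\Zk(s)}\,q^{1-s/2}\,ds \qquad (c>2).$$
The simple pole of $\Zk(s-1)$ at $s=2$ contributes the main term $\Res\cdot B(2,n+1)/\Zk(2)=m(F)$, so shifting past this pole leaves an analogous integral for $\Delta(q)$ on any vertical line $\Re(s)=c'\in(1,2)$. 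Stirling's formula gives the strong decay $|B(\sigma+it,n+1)|\ll(1+|t|)^{-n-1}$, which together with the Phragm\'en--Lindel\"of convexity bound for $\Zk$ places the integrand in $L^1$ on every vertical line $\Re(s)=\sigma>0$; this uniform integrability is what the high smoothness of $F=(1-t)^n$ is buying here.

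For the first assertion $\Delta(q)=o(q^{1/2})$, the plan is to shift the contour all the way to $\Re(s)=1$, legitimate because $\Zk(1+it)\neq 0$ by Hecke--de la Vall\'ee Poussin. Writing $s=1+it$ gives $q^{1-s/2}=q^{1/2}e^{-it(\log q)/2}$, and the Riemann--Lebesgue lemma applied as $\log q\to-\infty$ to the $L^1$ function $t\mapsto B(1+it,n+1)\,\Zk(it)/\Zk(1+it)$ then delivers the bound.

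The ``easy'' half of the equivalence proceeds by contour shifting: assuming $\Zk$ has no zeros with $\Re(s)>1-2\theta$, pick $\delta>0$ so that the strip $1-2\theta-\delta<\Re(s)<2$ is zero-free, shift the contour to $\Re(s)=1-2\theta-\delta$, and bound the resulting integral absolutely to obtain $\Delta(q)=O(q^{1/2+\theta+\delta/2})$---contradicting $\limsup q^{-\alpha}|\Delta(q)|=\infty$ for any $\alpha$ just above $1/2+\theta$. The converse---that $\Delta(q)=O(q^{\alpha'})$ for some $\alpha'>1/2+\theta$ forces $\Zk$ to be zero-free in $\Re(s)>2(1-\alpha')$ (a half-plane strictly containing the reach of $1-2\theta$)---is the Landau-type Tauberian half, which I plan to establish by passing from the pointwise asymptotic on $\Delta$ to an analytic extension of $\Zk(s-1)/\Zk(s)$ across $\Re(s)=2(1-\alpha')$, exploiting the non-negativity of both $\phik(\fn)$ and $F$.

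I expect the main obstacle to be exactly this Landau-type converse, since turning a pointwise $q\to 0$ asymptotic into analytic continuation of the underlying Dirichlet series is the technically delicate half of such Tauberian equivalences. It will help that $F$ is supported in $(0,1]$, so $\Delta(q)=-m(F)$ is simply constant for $q>1$; this controls the behaviour at infinity and permits a clean comparison, in an extended half-plane, between the Mellin transform of $\Delta$ and $B(s,n+1)\,\Zk(s-1)/\Zk(s)$. The remaining ingredients---contour shifts, Stirling decay of the Beta factor, convexity bounds for $\Zk$, and Riemann--Lebesgue---are all standard.
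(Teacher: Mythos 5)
Your proposal only addresses one of the six parts of the theorem. The statement is the full Theorem~\ref{thm1}, (A)--(F), and your contour set-up is specific to the function $F(t)=(1-t)^n$ of part (F): the explicit Beta-function evaluation $\int_0^1(1-t)^nt^{2s-1}dt$ is what makes your integrand explicit, and nothing in the proposal produces the other parts. Part (A) requires the generalized Mertens theorem, proved in the paper from the ideal-counting asymptotic $N(x)=\Res x+O(x^{1-1/n})$ (and its conditional refinements) together with M\"obius inversion for $\phik$; part (D) requires the oscillation lemma comparing $\Phi_{\KK}(x)/x^2$ at consecutive arguments with $x+1$ running through norms of prime ideals; parts (B), (C) and (E) concern general $f\in C_c^{\ell}(\Rb)$ (or the exponent $\floor{n/2}+1$), for which there is no explicit Mellin transform and one must instead integrate by parts $\ell$ times to trade smoothness of $f$ for decay of $\M_f(\sigma+it)$, as in Lemmas~\ref{cotas_principales} and \ref{cotas_principales_HR}. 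None of this is recoverable from the Beta-factor decay you invoke, so as a proof of the stated theorem the proposal has a genuine and large gap.

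Within part (F) itself your route is essentially the paper's (in the paper's normalization $\M_F(s)=\Zk(2s-1)\Gamma(n+1)/[\Zk(2s)\prod_{k=0}^n(2s+k)]$; shift to the critical line and apply Riemann--Lebesgue for $o(q^{1/2})$; a zero-free half-plane lets one shift further and improve the exponent, contradicting the limsup hypothesis), but two points need care. First, on and near the $1$-line non-vanishing of $\Zk$ plus convexity bounds for $\Zk$ do not by themselves make the integrand $L^1$: you need a polynomial bound for $1/\Zk$, e.g.\ $1/\Zk(\sigma+it)=O(t^{\epsilon})$ for $\sigma\geq 1$ (Landau's prime ideal theorem), and in the shifted strip a corresponding bound inside the zero-free region. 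Second, in your ``easy half'' the hypothesis to negate is that the supremum $\beta$ of the real parts of the zeros satisfies $\beta<1-2\theta$; assuming merely ``no zeros with $\Re(s)>1-2\theta$'' does not give a zero-free strip $1-2\theta-\delta<\Re(s)$, since zeros could accumulate at that line from the left. Finally, the direction you flag as the delicate ``Landau-type Tauberian half'' is actually the one the paper leaves implicit, and it is easier than you fear: no positivity is needed. Exactly as in the paper's argument for part (C), $m_q(F)-m(F)=O(q^{\alpha'})$ as $q\to0$, together with the fact that $m_q(F)$ vanishes for $q>1$, makes $\M_F(s)$ holomorphic for $\Re(s)>1-\alpha'$ apart from the simple pole at $s=1$; since a zero of $\Zk(2s)$ in that region cannot be cancelled by $\Zk(2s-1)$ (its zeros there would have real part in $(-1,0)$, where $\Zk$ has none), this forces $\Zk$ to have no zeros with $\Re(s)>2(1-\alpha')$, which is the contrapositive you want. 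Spelling that out, rather than appealing to non-negativity of $\phik$ and $F$, would close your sketch of part (F); the remaining parts still need the paper's separate arguments.
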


This manuscript is arranged as follows. In Section  \ref{preliminaries} the theory of the   Dedekind zeta--function is given. In Section \ref{Mellin}, the method of the Mellin transform of the measures $q^{-1}m_q $ and its analytical properties is presented. In Section \ref{Proofs} we state a generalization of Mertens' theorem and proof our statements. 

 \section{Preliminaries on algebraic number fields}\label{preliminaries}

 In this section, the theory of the relevant arithmetic functions over an algebraic number field is presented. We  quote the classical  books \cite{JN},  \cite{SL}, and \cite{Nar}, for a more comprehensive introduction to the arithmetic of algebraic number fields.

\subsection{The Dedekind zeta function.}

The  Dedekind zeta--function of the algebraic number field $\KK$ is 
defined by the Dirichlet series:
$$\Zk(s)= \sum_{\substack{ \fa \subset \Ok }} \frac{1}{\NN(\fa)^s}, \qquad (\Re(s)> 1),
$$ where  the sum is taken over all integral ideals of $\Ok$. The  series which defines $\Zk(s)$ is absolutely convergent for $\Re(s)> 1$, and uniformly convergent in $\Re(s)>1+\epsilon$, for any positive $\epsilon$. Hence it is a holomorphic function on the half--plane $\Re(s)>0$. Additionally, we have the Euler product
$$\Zk(s)=\prod_{\fp}(1-\NN(\fp)^{-s})^{-1}, \qquad (\Re(s) > 1),
$$
where  the product is taken over all prime integral ideals of $\Ok$.
The completed Dedekind zeta-function is defined by $\Zk(s)=\Lambda(s)\Zk(s),$ where as in \cite{SL}, $\Lambda(s)=2^{-r_2 s} \D^{\frac{s}{2}} 
\pi^{-\frac{ns}{2}}\Gamma\left(\frac{s}{2} \right)^{r_1} \Gamma(s)^{r_2}.
$  
By a theorem of Hecke, $\Xk(s)$ has a meromorphic continuation to the whole complex plane with only two sample poles at $s=0,1$ and it satisfies the functional equation
$$
\Xk(s)=\Xk(1-s).
$$

\begin{remark}
 The class number formula asserts that the residue $\Res=\Res_K$, of the  Dedekind zeta function  $\Zk(s)$ at $s=1$ is given by 
\begin{equation*}
\Res=Res_{s=1}\Zk(s) = \frac{2^{r}(2\pi)^{r_2} \h \R}{\w\sqrt{\abs{\D}}},
\end{equation*} where $r_1, r_2, \w, \h,\R,$ and $\D$ are, respectively, the number of real places, the number of complex places, the number of complex root of unity, the class number, the discriminant, and  the regulator, of the algebraic number field $\KK$.
\end{remark}

\subsubsection{The Dedekind zeta function and its relation with the M\"obius function an Euler's totient function}

Let $\muk$ be the M\"obius function of $K$, which is  defined
by 
$$
\muk(\fa)=\begin{cases} 1 & \text{ if } \fa= \Ok,\\ 
(-1)^k & \text{ if the ideal  is the product of } k \text{ distinct  prime ideals,}\\
0 & \text{otherwise.}
\end{cases}
$$
 The M\"obius function is multiplicative and satisfies the properties:
 \begin{enumerate}
  \item 
$$
\sum _{\fd \mid \fa}\muk (\fd)= \begin{cases}1 &  \text{if } \fa=\Ok,\\
    0 & \text{if } \fa \neq \Ok.\end{cases} $$
\item 
$$\phik(\fn)=\sum_{\fd \mid \fn}  \muk(\fd) \NN(\frac{\fn}{\fd}).$$
 \end{enumerate}
Then, by the theory of Dirichlet series, we  have the following formulae:

\begin{enumerate}
 \item 
 $$
\frac{1}{\Zk(s)}=\sum_{\substack{ \fa \subset \Ok}}\frac{\mu(\fa)}{\NN(\fa) ^{s}},  \qquad \text{for } \Re(s)>1.
$$
\item
  $$
\frac{\Zk(s-1)}{\Zk(s)}= \sum_{ \substack{ \fa \subset \Ok} } \frac{\phik(\fa)}{\NN(\fa)^s}, \qquad \text{for } \Re(s)>1.
$$
\end{enumerate}

\begin{remark}
The function 
\begin{equation}\label{funcionControladora}
\pk(s):=\frac{\Zk(2s-1)}{\Zk(2s)} \notag
\end{equation} will be of greatest importance as it governs many properties of the Mellin transform of $q^{-1}m_q$.
\end{remark}

\subsubsection{The Phragm\'en--Lindel\"of function of $\Zk(s)$}\label{PhrLin}

Let us recall the following facts about the order of growth of $\Zk(s)$ along vertical lines. For each real number $\sigma$ we define a number $\Nuk(\sigma)$ as the lower bound of the numbers  $l\geq 0$ such that
\begin{equation*}
 \Zk(\sigma+it)=O(\abs{t}^{l}) \mbox{ as } \abs{t}\rightarrow \infty. \notag
\end{equation*}
Then $\Nuk(s)$ has the following properties (see e.g. \cite[p. 266]{SL}):
\begin{enumerate}
 \item    $\Nuk$ is continuous, non-increasing, and never negative.
\item $\Nuk$  is convex downwards in the sense that the curve $y=\Nuk(\sigma)$ 
 has no points above the chord joining any two of its points.
\item  $\Nuk(\sigma)=0$   if  $\sigma \geq 1$  and  $\Nuk(\sigma)=n(\frac{1}{2}-\sigma)$ if  $\sigma  \leq 0$.
\end{enumerate}
The function $\Nuk(\sigma)$ is sometimes called the  Phragm\'en--Lindel\"of function of $\Zk(s)$. The Extended Lindel\"of hypothesis  for $\Zk(s)$ states that  $\Nuk(\frac{1}{2})=0$, viz.,  for any $\epsilon>0$,
\begin{equation*}
\Zk(\frac{1}{2}+it)=O(t^{\epsilon})    \text{  as  }\abs{t} \rightarrow \infty.
\end{equation*}

A classical result, essentially due to  Littlewood (\cite{lit}),  asserts that  the Riemann hypothesis for $\Zk(s)$ implies that the  Lindel\"of hypothesis  for $\Zk(s)$ holds (cf. \cite[p. 337]{Tit} and \cite[p. 267]{SL}). Explicitly, under the Extended Riemann hypothesis,   for any $\epsilon>0$
\begin{equation}\label{Littlewood}
\Zk(s)=O(t^{\epsilon})  \text{ and } \Zk(s)^{-1}=O(t^{\epsilon}),
\end{equation} for   $\ s=\sigma+it, \sigma>\frac{1}{2}  \text{ as }\abs{t} \rightarrow \infty$.

\begin{remark} From the identity
$$
\frac{1}{\Zk(s)}=\sum_{\substack{ \fa \subset \Ok}}\frac{\mu(\fa)}{\NN(\fa) ^{s}}, \qquad (\Re(s) > 1),
$$ 
it follows that $\frac{1}{\Zk(s)}$ is uniformly bounded on  half--planes of the form $\Re(s) > 1+ \epsilon$, for any $\epsilon>0$. Moreover, from the Landau prime ideal theorem, $1/ \Zk(s)$ is a holomorphic function on  the half--plane $\Re(s) \geq 1$, and for any $\epsilon>0$, we have $$\frac{1}{\Zk(\sigma +it)}=O(t^{\epsilon}), \qquad (\sigma \geq1).$$ 
\end{remark}

\section{A Mellin transform method}\label{Mellin}
In this section we  define the Mellin transform of the measures $m_q$ and state its analytical properties (see \cite{V2, Sa, Sam} and the book \cite{KB} ). 

Let $\ell$ be a non negative integer or infinity.  For each $f\in C_{c}^{\ell}(\Rb)$   consider the \emph{Mellin transform} of $m_q(f)q^{-1}$:
\begin{equation}\label{trasformadadeMellin}
\M(f,s):=\int_{0}^{\infty}m_q(f)q^{s-1}\frac{dq}{q} \qquad( \Re(s)>1).
\end{equation}

\begin{proposition} \label{convergenciadeM} Given $f\in C_c^\ell(\Rb)$ with $ 0 \leq \ell \leq \infty$ the integral defining $\M(f,s)$ converges absolutely in the half--plane $\Re(s)>1$ and uniformly in  strips of the form $1 < \sigma_{0} \leq \Re(s) \leq  \sigma_1  < \infty $. Hence it defines a  holomorphic function in the half--plane $\Re(s)>1$.
\end{proposition}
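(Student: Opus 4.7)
The plan is to show that the integrand $|m_q(f) q^{s-2}|$ is dominated by an integrable function on any compact subset of the half--plane $\Re(s)>1$, from which both absolute and uniform convergence follow at once. The first observation is that the support of the map $q \mapsto m_q(f)$ is bounded. Indeed, suppose $\mathrm{supp}(f)\subset [a,b]\subset(0,\infty)$. Then for each integral ideal $\fn$, the term $f(q^{1/2}\NN(\fn))$ is nonzero only when $a \leq q^{1/2}\NN(\fn)\leq b$, i.e.\ when $\NN(\fn)\leq bq^{-1/2}$. Since every integral ideal satisfies $\NN(\fn)\geq 1$, no such ideal exists once $q > b^{2}$, and consequently $m_q(f)=0$ for $q>b^{2}$.

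Next I would establish a uniform pointwise bound on $m_q(f)$. Using the trivial estimate $\phik(\fn)\leq \NN(\fn)$ together with the elementary bound $\#\{\fn:\NN(\fn)\leq x\}=O(x)$ (which is a weak consequence of the convergence of $\Zk(s)$ for $\Re(s)>1$, and will be sharpened in Section \ref{Proofs}), I obtain
$$
|m_q(f)|\;\leq\; \|f\|_\infty\, q \sum_{\NN(\fn)\leq bq^{-1/2}}\phik(\fn)\;\leq\; \|f\|_\infty\, q\cdot \bigl(bq^{-1/2}\bigr)\cdot O\bigl(bq^{-1/2}\bigr)\;=\;O(1),
$$
uniformly in $q\in(0,b^{2}]$. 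Write $C$ for the resulting constant.

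Now fix a strip $1<\sigma_{0}\leq \Re(s)\leq \sigma_{1}<\infty$ and set $s=\sigma+it$. On $(0,b^{2}]$ one has $|q^{s-2}|=q^{\sigma-2}\leq q^{\sigma_{0}-2}$ for $q\in(0,1]$ and $|q^{s-2}|\leq q^{\sigma_{1}-2}$ for $q\in[1,b^{2}]$, so $|m_q(f)q^{s-2}|$ is dominated on $\Rb$ by the fixed, $s$--independent integrable function
$$
g(q)\;=\;C\bigl(q^{\sigma_{0}-2}\chi_{(0,1]}(q)+q^{\sigma_{1}-2}\chi_{[1,b^{2}]}(q)\bigr),
$$
whose integral is finite precisely because $\sigma_{0}>1$. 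This establishes both absolute and uniform convergence of $\M(f,s)$ on the strip. Since for every fixed $q>0$ the function $s\mapsto m_q(f)q^{s-2}$ is entire, the uniform convergence on compact subsets of $\{\Re(s)>1\}$ combined with Morera's theorem (or the standard theorem on differentiation under the integral sign) yields that $\M(f,s)$ is holomorphic on $\Re(s)>1$. The only delicate point is the bound on $m_q(f)$, but it reduces to the elementary ideal--counting estimate used above; the refined arithmetic input will only be needed later, when identifying $\M(f,s)$ with $2\pk(s)\widetilde{f}(2s)$ and analytically continuing it.
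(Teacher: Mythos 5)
Your proof is correct and follows essentially the same route as the paper: use the compact support of $f$ to get $m_q(f)=0$ for $q>b^2$ and to restrict the sum to $\NN(\fn)\le bq^{-1/2}$, bound $m_q(f)$ uniformly, and then dominate the integrand $m_q(f)q^{s-2}$ by an $s$--independent integrable function on each strip $1<\sigma_0\le\Re(s)\le\sigma_1$, concluding holomorphy by Morera; the paper asserts the uniform bound $m_q(f)=O(\norm{f}_\infty)$ without the counting detail you supply. One minor caveat: the estimate $\#\{\fn:\NN(\fn)\le x\}=O(x)$ is not literally a consequence of the convergence of $\Zk(s)$ for $\Re(s)>1$ (that argument only yields $O(x^{1+\epsilon})$, or $O(x\log x)$ by taking $\sigma=1+1/\log x$), but rather of the ideal--counting theorem invoked later in the paper; since either bound suffices for your domination argument, the conclusion is unaffected.
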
 
\begin{proof} 
For any $f\in C_c^{0}(\Rb)$ write $\norm{f}_{\infty}=\sup_{\substack{q\in \Rb}}\{\abs{f(q)}\}$. Then, since $m_{i}(f,q) \leq \norm{f}_{\infty}$, for $\Re(s)>1$ :
\begin{equation}
\abs{\M_i(f,s)}\leq \norm{f}_{\infty}\left(\frac{\T^{\sigma-1}}{\sigma-1}\right),  \notag
\end{equation} where $\sigma=\Re(s)$ and $T$ is a value depending on $f$ such that $f(q)=0$ for $q>T$. Therefore, we have absolute convergence in $\Re(s)>1$ and  uniform convergence in strips of the form $1 < \sigma_{0} \leq \Re(s) \leq  \sigma_1  < \infty $. 
\end{proof}

Combining the definitions of the measures $m_q(f)$ (Eq. (\ref{measures})) and the Mellin transform (Eq. (\ref{trasformadadeMellin})), we get (if $ \Re(s)>1$):
\begin{align}
\M(f,s)&=\int_{0}^{\infty}\left( \sum_{\substack{\fa \subset \Ok }} q \phik(\fa)f\big(q^\frac{1}{2} \NN( \fa)\big)  \right) q^{s-2}\frac{dq}{q} \notag \\ 
&=\sum_{\substack{\fa \subset \Ok }}  \phik(\fa )\int_{0}^{\infty} f\big(q^\frac{1}{2} \NN(\fa)\big )   q^{s-1}\frac{dq}{q} \notag \\
&= \sum_{\substack{\fa \subset \Ok }} 2\frac{\phik( \fa )}{\NN(\fa  )^{2s} }\int_{0}^{\infty} f(q)  q^{2s-1}dq, \notag 
\end{align}
where the last equality follows by changing the variable: $q'=q^{1/2} \NN(\fa)$. Then, for $f \in C_{c}^{0}(\Rb)$ and $\Re(s)>1$, we have
\begin{equation}\label{RS}
\M_f(s)=2  \frac{\Zk(2s-1)}{\Zk(2s)} \int_{0}^\infty f(q)q^{2s-1}dq. 
 \end{equation}

Since the integral in the last expression represents an holomorphic function on the whole complex plane for any continuous function $f$ with compact support, the Mellin transform $\M_f(s)$  has the same properties of  
$$2\pk(s)=2 \frac{\Zk(2s-1)}{\Zk(2s)}.$$

Explicitly, $\M_{f}(s)$ has a meromorphic continuation to the whole complex plane  that is regular for  $\Re(s)\geq \frac{1}{2}$ except, possibly, for a simple pole at $s=1$ with residue
\begin{equation}\label{resizetaRS}
\mathcal{R}es_{s=1} (\M_f(s))=\frac{\Res}{ \Zk(2)}\int_{0}^{\infty} f(q) q dq. \notag
\end{equation}  
Furthermore,  the Riemann hypothesis for the Dedekind zeta--function $\Zk(s)$ holds if and only if for all $f\in C_{c}^{0}(\Rb)$ the function $\M_f(s)$ is regular for $\Re(s)>1/4$ except, possibly, for a simple pole at $s=1$ with residue given as above.

\begin{remark}The modified function $$\M^{\star}(f,s)=\Lambda(2s-1)\Zk(2s) \M(f, s)=\Xk(2s-1)\int_{0}^\infty f(q)q^{2s-1}dq$$  has a holomorphic continuation to the whole complex plane except, possibly, for simple poles at $s=0,1$ and satisfies the functional equation $$\M^{\star}(f,s)=\M^{\star}(f,1-s).$$
\end{remark}


\begin{lemma}\label{cotas_principales}
If  $f \in C_c^{\ell}(\Rb)$ with $  0 \leq \ell < \infty$. Then, there exists $t_0>0$, independent of $f$, such that
$$
\abs{\M_f(\sigma+it)}\leq \frac{\beta_f\,t^{\frac{n}{2}+\epsilon} }{(1+\abs{t})^\ell} \quad \text{ for } t>t_0, \ \epsilon>0,
$$
and $1/2 \leq \sigma \leq 2$,
where  $\beta_f$ is a constant depending on  the first $l$  derivatives of $f$.
\end{lemma}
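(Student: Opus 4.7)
The plan is to start from the closed formula for the Mellin transform derived just above the lemma,
$$\M_f(s) = 2\,\frac{\Zk(2s-1)}{\Zk(2s)}\int_0^\infty f(q)\,q^{2s-1}\,dq,$$
and estimate the two factors separately. The factor $1/(1+|t|)^\ell$ will come from the Mellin integral of $f$ (via the smoothness and compact support of $f$), while the factor $|t|^{n/2+\epsilon}$ will come from the zeta quotient (via the Phragm\'en--Lindel\"of bounds recalled in \S\ref{PhrLin}).

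For the first factor, I would integrate by parts $\ell$ times on the integral
$\int_0^\infty f(q)\,q^{2s-1}\,dq$. Since $f$ has compact support contained in some interval $[a,b]\subset \Rb$, every boundary term vanishes, yielding
$$\int_0^\infty f(q)\,q^{2s-1}\,dq=\frac{(-1)^\ell}{(2s)(2s+1)\cdots(2s+\ell-1)}\int_a^b f^{(\ell)}(q)\,q^{2s+\ell-1}\,dq.$$
On the strip $1/2\le \sigma\le 2$ the remaining integral is bounded by $\|f^{(\ell)}\|_\infty(b-a)\,b^{2\sigma+\ell-1}$, hence by a constant $\beta_f$ depending only on the first $\ell$ derivatives of $f$ and its support. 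The denominator has modulus $\gtrsim |t|^\ell$ for $|t|$ large enough, so after possibly enlarging $t_0$ and absorbing fixed constants into $\beta_f$, one obtains a bound $\beta_f/(1+|t|)^\ell$.

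For the second factor, I would invoke the Phragm\'en--Lindel\"of function $\Nuk$. When $\sigma\in[1/2,2]$ we have $\Re(2s-1)=2\sigma-1\in[0,3]$; by property (3) of $\Nuk$ and the convexity property (2), the interpolated bound gives
$$\Nuk(2\sigma-1)\le \tfrac{n}{2}(1-(2\sigma-1))=n(1-\sigma)\quad \text{for }\sigma\in[1/2,1],$$
and $\Nuk(2\sigma-1)=0$ for $\sigma\in[1,2]$. Taking the supremum over $\sigma\in[1/2,2]$ yields the uniform bound $\Zk(2s-1)=O(|t|^{n/2+\epsilon})$. For the denominator, $\Re(2s)=2\sigma\ge 1$, so by the remark quoted at the end of \S\ref{PhrLin} we have $1/\Zk(2s)=O(|t|^{\epsilon})$ uniformly on that range. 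Combining gives $|\Zk(2s-1)/\Zk(2s)|\ll |t|^{n/2+\epsilon}$, and multiplying by the bound on the integral yields the desired estimate.

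The main obstacle is the uniformity of the Phragm\'en--Lindel\"of bound over the whole strip $1/2\le \sigma\le 2$ while keeping the exponent sharp; the worst case is $\sigma=1/2$, where $2s-1$ sits on the imaginary axis and $\Nuk(0)=n/2$ saturates the bound. Everything else (vanishing of boundary terms, absorption of constants into $\beta_f$, choice of $t_0$ so that $|2s+j|\ge|t|$ for $j=0,\dots,\ell-1$) is routine.
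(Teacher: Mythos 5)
Your proposal is correct and follows essentially the same route as the paper: the closed formula $\M_f(s)=2\,\pk(s)\int_0^\infty f(q)q^{2s-1}dq$, integration by parts $\ell$ times to gain the factor $(1+\abs{t})^{-\ell}$, and the Phragm\'en--Lindel\"of convexity bound $\Zk(2s-1)=O(\abs{t}^{n/2+\epsilon})$ together with an estimate for $\Zk(2s)^{-1}$ on $\Re(2s)\geq 1$. Your use of $1/\Zk(2s)=O(\abs{t}^{\epsilon})$ from the remark on the line $\Re(2s)=1$ is in fact slightly more careful than the paper's bare claim $\Zk(2s)^{-1}=O(1)$ at the edge $\sigma=1/2$, and the extra $\epsilon$ is harmlessly absorbed into the stated exponent.
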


\begin{proof}

From Equation (\ref{RS}), we have

$$\M_f(s)=2 \Zk(2s)^{-1}\Zk(2s-1)\int_{\Rb}q^{2s-1}f(q)dq.$$
Then, integrating by parts we obtain:
\begin{align}
\M_f(s)&=   \frac{2\Zk(2s)^{-1}\Zk(2s-1)(-1)^{\ell}}{2s(2s+1)\cdots(2s+\ell-1)} \int_{\Rb}q^{2s+\ell-1}f^{(\ell)}(q)dq. \notag
\end{align}
 Let $\T>0$ be such that $ supp f \subset \{\, q \in \Rb \mid q \leq \T \,\}$. Thus,
\begin{align} 
\M_f(s)&=   \frac{2\Zk(2s)^{-1}\Zk(2s-1)(-1)^{\ell}}{2s(2s+1)\cdots(2s+\ell-1)} \int_{0}^{T}q^{2s+\ell-1}f^{(\ell)}(q)dq. \notag
\end{align}
Notice that  
$$\abs{ \int_{0}^{T}q^{2s+\ell-1}f^{(\ell)}(q)dq} \leq \norm{f^{(\ell)}}_\infty \frac{T^{2\sigma+\ell}}{2\sigma+\ell}, \qquad ( s=\sigma+it).
$$ Then there exists   a constant $\beta_f$ depending only on   the first $l$  derivatives of $f$ such that
we can bound the absolute value of  the Mellin transform $\M_f(s)$ over the  vertical band $1/2 \leq\Re(s)\leq 2$  by $\beta_f$ times the absolute value of   $$ \frac{2\Zk(2s)^{-1}\Zk(2s-1)(-1)^{\ell}}{2s(2s+1)\cdots(2s+\ell-1)}.$$ 
 
 On the other hand, form the properties of the Phragm\'en--Lindel\"of function  of the Dedekind zeta function, if $\epsilon>0$, 
$$
\Zk(2s-1)=O(t^{\frac{n}{2}+\epsilon}), \qquad(s=\sigma+it),
$$
uniformly on $1/2 \leq \Re(s) \leq 2$. In addition, if $ 1/2\leq\Re(s)$, $\Zk(2s)^{-1}=  O(1)$.
Therefore, for all $\epsilon >0$,  we have
\begin{equation}
 \pk(\sigma+it)=O(\abs{t}^{n/2+\epsilon}),  \notag
\end{equation} as $\abs{t} \rightarrow \infty$, 
which implies that $\M(\sigma+it)=O(\abs{t}^{n/2-\ell+\epsilon})$, uniformly in $1/2\leq \sigma \leq 2$. This proves our claim.

\end{proof}

\begin{lemma}\label{cotas_principales_HR}  If the Riemann hypothesis for the Dedekind zeta function $\Zk(s)$ holds and $ f \in C_c^{\ell}(\Rb)$ with  $ n+1 \leq \ell \leq \infty $, then for every $0< \epsilon<\frac{1}{4}$, there exists $t_0>0$ such that
$$
\abs{\M_f(\sigma+it)}\leq \frac{\beta_f(\epsilon)}{(1+\abs{t})^{1+\epsilon}}, \qquad \text{ for } t>t_0,
$$
for all  $\frac{1}{4}+\epsilon \leq \sigma\leq 2$. Here $\beta_f(\epsilon)$ is a constant depending  on $\epsilon$ and a finite number of derivatives of $f$. 
\end{lemma}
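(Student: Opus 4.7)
The plan is to repeat the integration-by-parts scheme from the proof of Lemma \ref{cotas_principales} in the wider vertical strip $\frac{1}{4}+\epsilon \leq \sigma \leq 2$, but now exploiting the sharper estimates on $\Zk$ and $\Zk^{-1}$ afforded by the Riemann hypothesis through Littlewood's bounds (\ref{Littlewood}). One starts from the same identity
$$\M_f(s)=\frac{(-1)^\ell \cdot 2\,\Zk(2s-1)\Zk(2s)^{-1}}{2s(2s+1)\cdots(2s+\ell-1)}\int_{0}^{T}q^{2s+\ell-1}f^{(\ell)}(q)\,dq,$$
where the final integral is bounded by a constant $\beta_f$ uniformly on the strip (since $\sigma\leq 2$) and the linear factors in the denominator contribute $\sim |t|^{\ell}$ for $|t|$ large.

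I would then bound the zeta quotient in two sub-regions. Because $\Re(2s)=2\sigma\geq \frac{1}{2}+2\epsilon$ throughout the strip, Littlewood's estimate (\ref{Littlewood}) immediately gives $\Zk(2s)^{-1}=O(|t|^{\epsilon'})$ for every $\epsilon'>0$. For $\Zk(2s-1)$ a case split is needed: when $\sigma\geq \frac{3}{4}$ the same bound yields $\Zk(2s-1)=O(|t|^{\epsilon'})$ directly; when $\frac{1}{4}+\epsilon \leq \sigma < \frac{3}{4}$ I would apply the functional equation $\Xk(u)=\Xk(1-u)$ at $u=2s-1$ to write $\Zk(2s-1)=\frac{\Lambda(2-2s)}{\Lambda(2s-1)}\Zk(2-2s)$. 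Stirling's asymptotics on the Gamma factors of $\Lambda$ give $|\Lambda(2-2s)/\Lambda(2s-1)|\ll |t|^{\,n(3/2-2\sigma)}$, while $\Re(2-2s)\geq \frac{1}{2}$ allows a further application of (\ref{Littlewood}) to yield $\Zk(2-2s)=O(|t|^{\epsilon'})$. Combining the three estimates,
$$|\M_f(\sigma+it)|\ll |t|^{\,\max\{0,\,n(3/2-2\sigma)\}+2\epsilon'-\ell}$$
uniformly on the strip.

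The worst exponent is attained at the left edge $\sigma=\frac{1}{4}+\epsilon$, where it equals $n(1-2\epsilon)+2\epsilon'-\ell$. With $\ell\geq n+1$ (for $\ell=\infty$ one simply picks any finite $\ell\geq n+1$) this is at most $-1-2n\epsilon+2\epsilon'$, so since $n\geq 2$ one may choose $\epsilon'<\frac{(2n-1)\epsilon}{2}$, forcing the exponent to be strictly less than $-(1+\epsilon)$ and concluding the argument.

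The main obstacle is this last bit of bookkeeping: the hypothesis $\ell\geq n+1$ is precisely what is needed to absorb the factor $|t|^{n(3/2-2\sigma)}$ contributed by $\Lambda(2-2s)/\Lambda(2s-1)$ at the new left edge of the strip, and one has to pick $\epsilon'$ small enough in terms of $\epsilon$ so that the estimate still beats $(1+|t|)^{-(1+\epsilon)}$ in the boundary case $\ell=n+1$. Conceptually, the Riemann hypothesis upgrades the growth of $\Zk(2s-1)$ from $|t|^{n/2+\epsilon'}$ on $\Re(s)=\frac{1}{2}$ (as used in Lemma \ref{cotas_principales}) to the subpolynomial $|t|^{\epsilon'}$ to the right of $\Re(s)=\frac{3}{4}$, which is exactly what allows the strip of integrable decay in $t$ to be pushed leftward past $\Re(s)=\frac{1}{4}$.
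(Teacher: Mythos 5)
Your proposal is correct and follows essentially the same route as the paper: integrate by parts $\ell$ times, bound $\Zk(2s)^{-1}$ by Littlewood's estimate under RH, and bound $\Zk(2s-1)$ by growth of order $\abs{t}^{n(1-2\epsilon)}$ at the left edge $\sigma=\tfrac{1}{4}+\epsilon$, so that $\ell\geq n+1$ yields the decay $(1+\abs{t})^{-(1+\epsilon)}$. The only deviation is cosmetic: you re-derive the bound on $\Zk(2s-1)$ from the functional equation and Stirling (using RH on the reflected factor), whereas the paper simply invokes the unconditional Phragm\'en--Lindel\"of convexity bound recalled in Section \ref{PhrLin}.
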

\begin{proof} First, we estimate $\pk(s)=\Zk(2s)^{-1} \Zk(2s-1)$ in the region $\frac{1}{4}+\epsilon\leq\Re(s) \leq 2$, under the assumption of the Riemann hypothesis for the Dedekind zeta function $\zeta_{\KK}(s)$.  Since this implies that   $\Zk(2s)^{-1}=O(t^{(2n-1)\epsilon})$ and we already have $\Zk(2s-1)=O(t^{n(1 -2\epsilon)})$,  both uniformly in $\frac{1}{4}+\epsilon \leq\Re(s) \leq 2$, it follows that $\pk(s)=O(t^{n(1-\epsilon)})$, uniformly in $\frac{1}{4}+\epsilon\leq\Re(s) \leq 2$. Now  we can integrate by parts as in  Lemma \ref{cotas_principales}  to see that 
$f \in C_c^{\ell}(\Rb)$ with  $ n+1 \leq \ell \leq \infty $ ensures that, 
$$\abs{\M_f(\sigma+it)} =O\big((1+\abs{t})^{-(1+\epsilon)} \big), \qquad(\abs{
t}\rightarrow \infty),$$ 
 uniformly in $\frac{1}{4}+\epsilon\leq\sigma \leq 2$. This proves the assertion of our lemma.
\end{proof}

\begin{remark} Denote by $C_c^{\ell}(\Rb)^{\star}$  the topological dual of $C_c^{\ell}(\Rb)$. The function $\M: \{ \Re (s) >1 \} \rightarrow C_c^{\ell}(\Rb)^{\star}$, given by
$$
s \mapsto  \int_{0}^{\infty}m_q(\cdot)q^{s-2}dq, \qquad (\Re (s) >1 ),
$$ defines a weakly holomorphic function. For every $s$ such that $ \Re (s) >1 $, $\M(s)$ defines
an infinite measure on $\Rb$. When $\ell = \infty$, $\M(s)$ defines a holomorphic function
whose values are distributions of finite order.  The analytic continuation of  $\M(s)$ to the whole complex plane  is  a weakly meromorphic function with values in
the  distribution space of $\Rb$ (see \cite{Sa, V2, NV}).
\end{remark}

\section{Uniform distribution of discrete measures}\label{Proofs}

In this section we prove our statements. We start by proving a generalization of Mertens' theorem. For a very general statement on visible lattice points   over an algebraic number fields  see  \cite{Sit}.

\begin{lemma}\label{normas} 
\begin{enumerate}
 \item For any ideal fractional ideal $\fd$ and  every $x>1$, we have 
$$ \sum_{\substack{ \NN(\fn) \leq x \\\fd \mid \fn }} \NN (\fn/\fd)= \frac{\Res}{2 } \left(\frac{x}{\NN(\fd)}\right)^2 + O\left(\big(\frac{x}{\NN(\fd)}\big)^{2-\frac{1}{n}}  \right).$$

\item If the Lindel\"of hypothesis holds, for any ideal fractional ideal $\fd$ and  every $x>1$, we have 
$$ \sum_{\substack{ \NN(\fn) \leq x \\\fd \mid \fn }} \NN (\fn/\fd)= \frac{\Res}{2 } \left(\frac{x}{\NN(\fd)}\right)^2 + O\left(\big(\frac{x}{\NN(\fd)}\big)^{\frac{3}{2}+\epsilon}  \right).$$ 

\item If $N(x) = \sum_{\substack{ \NN(\fn) \leq x \\ \fn \subset \Ok}}  1=\Res x + O(x^{\frac{1}{2}-\frac{1}{2n}+\epsilon} )$ for any positive $\epsilon$, then  for  $x > 1$ and $\epsilon>0$ we have
$$ \sum_{\substack{ \NN(\fn) \leq x \\\fd \mid \fn }} \NN (\fn/\fd)= \frac{\Res}{2 } \left(\frac{x}{\NN(\fd)}\right)^2 + O\left(\big(\frac{x}{\NN(\fd)}\big)^{\frac{3}{2}-\frac{1}{2n}+\epsilon}  \right).$$ 
\end{enumerate}

 \end{lemma}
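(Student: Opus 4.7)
The plan is to reduce all three parts to a single asymptotic estimate for partial sums of the ideal norm and then plug in the respective error bound for the ideal counting function.

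First I would perform the substitution $\fm = \fn \fd^{-1}$. Since $\fd \mid \fn$ means exactly that $\fm$ is an integral ideal, and since the ideal norm is completely multiplicative, the condition $\NN(\fn) \leq x$ becomes $\NN(\fm) \leq x/\NN(\fd)$ and $\NN(\fn/\fd) = \NN(\fm)$. Writing $y = x/\NN(\fd)$, all three identities therefore reduce to the single claim
$$ S(y) := \sum_{\substack{\fm \subset \Ok \\ \NN(\fm) \leq y}} \NN(\fm) = \frac{\Res}{2}\, y^{2} + O(y^{2-\delta}), $$
with $\delta = 1/n$ in part (1), $\delta = 1/2 - \epsilon$ in part (2), and $\delta = 1/2 + 1/(2n) - \epsilon$ in part (3).

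The key tool is Abel summation against the ideal counting function $N(y) = \sum_{\NN(\fm)\leq y} 1$, giving
$$ S(y) = y\, N(y) - \int_{0}^{y} N(t)\, dt. $$
Plugging $N(t) = \Res\, t + R(t)$ with a remainder $R(t)$, the principal terms combine as $\Res\, y^{2} - \frac{\Res}{2} y^{2} = \frac{\Res}{2} y^{2}$, and the error reduces to $y\, R(y) + \int_{0}^{y} R(t)\, dt$, which for any power bound $R(t) = O(t^{\alpha})$ with $\alpha < 1$ is $O(y^{1+\alpha})$.

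For part (1) I would invoke the classical Landau ideal theorem $N(y) = \Res\, y + O(y^{1-1/n})$, yielding the exponent $2 - 1/n$. For part (2) I would use the standard implication of the Lindel\"of hypothesis for $\Zk(s)$: combining Perron's formula for the Dirichlet series $\sum \NN(\fm)^{-s} = \Zk(s)$ with the growth bound $\Zk(1/2+it) = O(|t|^{\epsilon})$ reviewed in Section \ref{PhrLin}, one obtains $R(y) = O(y^{1/2+\epsilon})$, whence the exponent $3/2 + \epsilon$. Part (3) is direct: the hypothesis is stated as a bound on $N(x)$ itself, so Abel summation produces $\frac{\Res}{2} y^{2} + O(y^{3/2 - 1/(2n) + \epsilon})$ immediately.

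The only delicate point is the Lindel\"of implication in part (2); the argument is a contour shift of the Perron integral for $\Zk(s)$ to the critical line with convexity bounds in the right-hand strip, which is classical and parallels the Riemann zeta case, so I would simply cite it. The remaining algebra is routine bookkeeping once the substitution $y = x/\NN(\fd)$ is restored.
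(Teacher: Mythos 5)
Your proof is correct and takes essentially the same route as the paper: substitute $\mathfrak{m}=\fn\fd^{-1}$ to reduce to the partial sums $S(y)=\sum_{\NN(\mathfrak{m})\le y}\NN(\mathfrak{m})$ with $y=x/\NN(\fd)$, and then feed in the ideal-counting asymptotics (Landau's theorem, the Lindel\"of-conditional bound $N(y)=\Res y+O(y^{1/2+\epsilon})$, which the paper simply cites, and the hypothesis of part (3)). If anything, your exact Abel-summation identity $S(y)=yN(y)-\int_0^y N(t)\,dt$ is the careful version of the paper's displayed step $\sum_{\NN(\mathfrak{m})\le y}\NN(\mathfrak{m})\le\int_1^y N(t)\,dt$, which is not literally a valid inequality, although it leads to the same asymptotic conclusion.
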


\begin{proof} First, if  $N(x) = \abs{ \fn \subset \Ok \mid  \NN(\fn) \leq x\}}$,  it is known that (see e.g. \cite{MO}),
 $$N(x) = \sum_{\substack{ \NN(\fn) \leq x \\ \fn \subset \Ok}}  1=\Res x + O\big(x^{1-1/n} \big).$$
  Therefore, for  any integral ideal $\fd$, we have 
\begin{align}
\sum_{\substack{  \NN(\fn) \leq x \\ \fd \mid\fn}} \NN (\fn/\fd) &= 
\sum_{\substack{ \NN(\fn') \leq x/\NN(\fd) \\ \fn'\subset \Ok}} \NN (\fn') \notag \\
&\leq \int_1^{\frac{x}{\NN(\fd)}} N(t)dt \notag \\
&=\int_1^{\frac{x}{\NN(\fd)}} \Res t + O\big(t^{1-1/n} \big)dt \notag \\
&= \frac{\Res}{2} \left(\frac{x}{\NN(\fd)}\right)^2 + \int_1^{\frac{x}{\NN(\fd)}} O\big(t^{1-1/n} \big)dt \notag\\ 
&= \frac{\Res}{2} \left(\frac{x}{\NN(\fd)}\right)^2 + O\left(\big(\frac{x}{\NN(\fd)}\big)^{2-\frac{1}{n}}  \right). \notag
\end{align} 
Now, if the Lindel\"of hypothesis holds, it follows that (see \cite{Tak} and \cite{SL} pp.271), for any  positive $\epsilon$,
 $$N(x) = \sum_{\substack{ \NN(\fn) \leq x \\ \fn \subset \Ok}}  1=\Res x + O\big(x^{\frac{1}{2}+\epsilon}\big).$$ Hence
 \begin{align}
\sum_{\substack{  \NN(\fn) \leq x \\ \fd \mid\fn}} \NN (\fn/\fd) 
&= \frac{\Res}{2} \left(\frac{x}{\NN(\fd)}\right)^2 + \int_1^{\frac{x}{\NN(\fd)}} O\big(t^{\frac{1}{2}+\epsilon} \big)dt \notag\\ 
&= \frac{\Res}{2} \left(\frac{x}{\NN(\fd)}\right)^2 + O\left(\big(\frac{x}{\NN(\fd)}\big)^{\frac{3}{2}+\epsilon}  \right). \notag
\end{align} 
Finally, if $N(x) = \sum_{\substack{ \NN(\fn) \leq x \\ \fn \subset \Ok}}  1=\Res x + O(x^{\frac{1}{2}-\frac{1}{2n}+\epsilon} )$ for any positive $\epsilon$, then for  $x > 1$ and $\epsilon>0$ we have
$$ \sum_{\substack{ \NN(\fn) \leq x \\\fd \mid \fn }} \NN (\fn/\fd)= \frac{\Res}{2} \left(\frac{x}{\NN(\fd)}\right)^2 + O\left(\big(\frac{x}{\NN(\fd)}\big)^{\frac{3}{2}-\frac{1}{2n}+\epsilon}  \right).$$

\end{proof}

For  a real number $x \geq 1$, define
\begin{equation}
\Phi_{\KK}(x)=\sum_{\substack{\NN(\fa) \leq x}} \phik(\fa). 
\end{equation} A general version of Mertens' theorem is the content of the following proposition.

\begin{proposition}\label{Mertens} 
\begin{enumerate}
 \item
For  $x > 1$ we have
\begin{equation}
\Phi_{\KK}(x)=\frac{\Res }{2\Zk(2)} x^2+O(x^{2-1/n}\log x) . \notag
\end{equation}

\item

If the Extended Lindel\"of hypothesis is true, for  $x > 1$ and $\epsilon>0$ we have
\begin{equation}
\Phi_{\KK}(x)=\frac{\Res }{2\Zk(2)} x^2+O(x^{\frac{3}{2}+\epsilon}\log x) . \notag
\end{equation}

\item
If $N(x) = \sum_{\substack{ \NN(\fn) \leq x \\ \fn \subset \Ok}}  1=\Res x + O(x^{\frac{1}{2}-\frac{1}{2n}+\epsilon} \log x)$ for any positive $\epsilon$, then for  $x > 1$ and $\epsilon>0$ we have
\begin{equation}
\Phi_{\KK}(x)=\frac{\Res }{2\Zk(2)} x^2+O(x^{\frac{3}{2}-\frac{1}{2n}+\epsilon} \log x) . \notag
\end{equation}

\end{enumerate}

\end{proposition}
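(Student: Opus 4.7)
The plan is to use the M\"obius inversion $\phik(\fn)=\sum_{\fd\mid \fn}\muk(\fd)\NN(\fn/\fd)$ recalled in Section \ref{preliminaries} and reduce $\Phi_\KK(x)$ to an outer sum against $\muk$ whose inner sum is controlled by Lemma \ref{normas}. Concretely, I would substitute the identity into the definition of $\Phi_\KK(x)$ and interchange the order of summation to write
$$
\Phi_\KK(x)=\sum_{\NN(\fd)\leq x}\muk(\fd)\sum_{\substack{\fd\mid \fn\\ \NN(\fn)\leq x}}\NN(\fn/\fd),
$$
noting that the outer sum truncates at $\NN(\fd)\leq x$ since the inner sum is empty otherwise.

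Applying each of the three parts of Lemma \ref{normas} to the inner sum produces a common main term and a part-specific error term. The main term contribution is
$$
\frac{\Res\, x^2}{2}\sum_{\NN(\fd)\leq x}\frac{\muk(\fd)}{\NN(\fd)^2}=\frac{\Res\, x^2}{2\Zk(2)}+O(x),
$$
where the identification $\Zk(2)^{-1}=\sum_{\fd}\muk(\fd)\NN(\fd)^{-2}$ from Section \ref{preliminaries} is combined with the tail estimate $\sum_{\NN(\fd)>x}\NN(\fd)^{-2}=O(1/x)$, obtained by Abel summation from the trivial bound $N(t)=O(t)$. The $O(x)$ term is absorbed into all three claimed error estimates.

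For the error coming from Lemma \ref{normas}, in each case I would bound
$$
x^{\alpha}\sum_{\NN(\fd)\leq x}\frac{1}{\NN(\fd)^{\alpha}},
$$
where $\alpha\in\{\,2-\tfrac1n,\ \tfrac32+\epsilon,\ \tfrac32-\tfrac{1}{2n}+\epsilon\,\}$ according to the part. Using Abel summation with $N(t)=O(t)$, this reduces to estimating $\int_1^x t^{1-\alpha}\,dt$; since $\alpha>1$ in each case (for $n\geq 2$), the sum is $O(1)$ and could even yield the cleaner bound $O(x^{\alpha})$, but the $\log x$ factor in the statement is harmless and is kept for uniformity (and to cover boundary behaviour). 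Combining with the main term gives the three advertised asymptotics.

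The only real obstacle is bookkeeping: keeping track of which estimate of $N(t)$ from Lemma \ref{normas} is being invoked for each case and verifying that the error from the truncation of the Dirichlet series for $1/\Zk(2)$ is genuinely dominated by the principal error coming from the inner sum. Apart from this, the argument is a direct analogue of the classical derivation of Mertens' theorem from $\varphi=\mu*\mathrm{id}$.
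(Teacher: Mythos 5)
Your proposal is correct and follows essentially the same route as the paper: Möbius inversion $\phik=\muk * \NN$, interchange of summation, Lemma \ref{normas} for the inner sum, and the identity $\Zk(2)^{-1}=\sum_{\fd}\muk(\fd)\NN(\fd)^{-2}$ for the main term. Your treatment is in fact slightly more careful than the paper's (you make the truncation tail $O(x)$ explicit and observe that the error sums are $O(1)$ since the exponents exceed $1$ for $n\geq 2$, so the $\log x$ factor is not even needed), whereas the paper bounds the error sum by an integral that produces the $\log x$.
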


\begin{proof} First, from the relation of the Euler totient function  and the M\"obius function of $\KK$, 

\begin{align}
\Phi_{\KK}(x) &=\sum_{\substack{\NN(\fa) \leq x}} \sum_{\fd \mid \fa} \muk(\fd) \NN(\frac{\fa}{\fd})  \notag \\
 &=\sum_{\substack{ \fd \subset \Ok \\ \NN(\fd) \leq x} } \muk(\fd)\left( \sum_{\substack{ \NN(\fa) \leq x \\ \fd \mid \fa }} \frac{\NN(\fa)}{\NN(\fd)} \right).\notag 
\end{align}

Since the series  
$$
\frac{1}{\Zk(2)}=\sum_{\substack{ \fd \subset \Ok}}\frac{\mu(\fd)}{\NN(\fd) ^{2}}
$$ is convergent, from Lemma \ref{normas}, one gets

\begin{align}
\Phi_{\KK}(x) &=\sum_{\substack{\fd \subset \Ok \\ \NN(\fd) \leq x}} \mu(\fd)f(\fd) \notag \\
  &=\frac{\Res}{2}\sum_{\substack{\fd \subset \Ok \\ \NN(\fd) \leq x}} \frac{ \mu(\fd)}{ \NN (\fd)^2} x^2+O\left(x^{2-1/n} \sum_{\NN(\fd) \leq x} \frac{1}{\NN(\fd)^{2-1/n} } \right) \notag\\
  &=\frac{\Res}{2}\sum_{\fd \subset \Ok} \frac{ \mu(\fd)}{ \NN (\fd)^2} x^2+O\left(x^{2-1/n} \int_1^x \frac{t^{1-\frac{1}{n}}}{t^{2-1/n} } dt\right) \notag\\
  &= \frac{\Res}{2\Zk(2)} x^2+O(x^{2-1/n} \log x). \notag 
\end{align} 
Now, if the Extended Lindel\"of hypothesis is true, for  $x > 1$ and $\epsilon>0$ we have

\begin{align}
\Phi_{\KK}(x) 
  &=\frac{\Res}{2\Zk(2)} +O\left(x^{\frac{3}{2}+\epsilon} \int_1^x \frac{t^{\frac{1}{2}+\epsilon}}{t^{\frac{3}{2}+\epsilon} } dt\right) \notag\\
  &= \frac{\Res}{2\Zk(2)} x^2+O(x^{{\frac{3}{2}+\epsilon}} \log x ) \notag 
\end{align}
Finally, if $N(x) = \sum_{\substack{ \NN(\fn) \leq x \\ \fn \subset \Ok}}  1=\Res x + O(x^{\frac{1}{2}-\frac{1}{2n}+\epsilon} )$ for any positive $\epsilon$, then for  $x > 1$ and $\epsilon>0$ we have
\begin{align}
\Phi_{\KK}(x) 
  &=\frac{\Res}{2\Zk(2)} +O\left(x^{\frac{3}{2}-\frac{1}{2n}+\epsilon} \log x\right) \notag
\end{align}
 
\end{proof}

\subsection{Proof of Theorem A}

Let $f=\chi_{[a,b]}$ be the characteristic function of the interval $[a,b]$, where $0<a<b$. From the generalized Mertens'
theorem (Proposition \ref{Mertens}), we have
\begin{align}
m_q(f) &=\sum_{\substack{a q^{-\frac{1}{2}} \leq  \NN(\fn) \leq bq^{ - \frac{1}{2}}}} q \phik(\fn) \notag\\
&=\frac{\Res}{2  \Zk(2)}(b^2-a^2)+O(q^{\frac{1}{2n}} \log q ) \notag\\
&= \frac{\Res}{ \Zk(2)} \int_{\Rb}uf(u)du +O(q^{\frac{1}{2n}}  \log q )  \notag
\end{align}
The last two statements can be   proved similarly, using  Proposition \ref{Mertens}.

\subsection{Proof of Theorem B}

Let us show that   the critical exponent of $q$ in the error term is $\frac{1}{2}$  when considering characteristic functions of intervals.
This result follows immediately from the next observation originally discovered by A. Verjovsky  in \cite{V, V2}.

\begin{lemma}  For all $\alpha > 1$ we have
$$\limsup x^\alpha\abs{\frac{\Phi_{\KK}(x)}{x^2}- \frac{\Res}{2 \Zk(2)}} =\infty.$$ 
\end{lemma}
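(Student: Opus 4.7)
The plan is to establish the stronger unconditional bound
$\Phi_{\KK}(x) - \frac{\Res}{2\Zk(2)}\,x^{2} = \Omega(x)$ as $x\to\infty$,
from which the lemma is immediate. Indeed, writing
$E(x) = \Phi_{\KK}(x) - \frac{\Res}{2\Zk(2)}\,x^{2}$, if
$\abs{E(y_k)} \geq c\,y_k$ along some sequence $y_k\to\infty$, then
$y_k^{\alpha}\abs{E(y_k)}/y_k^{2} \geq c\,y_k^{\alpha - 1}\to\infty$ for every $\alpha>1$,
which is the conclusion of the lemma.

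The mechanism I would exploit is the tension between the discreteness of the step
function $\Phi_{\KK}$ and the smoothness of the polynomial main term. Since
$\phik(\fn) = \NN(\fn)\prod_{\fp\mid\fn}(1 - 1/\NN(\fp))$, for a prime ideal
$\fp$ one has the clean identity $\phik(\fp) = \NN(\fp) - 1$. Hence $\Phi_{\KK}$
has a jump of size at least $\NN(\fp) - 1$ at the point $x = \NN(\fp)$, whereas
$\frac{\Res}{2\Zk(2)}\,x^{2}$ is $C^{\infty}$. Concretely, setting $n = \NN(\fp)$
and choosing $\delta \in (0,1)$ (so that $(n-\delta,n)$ contains no ideal norm,
all ideal norms being positive integers), we obtain
\[
E(n) - E(n-\delta) \;=\; \sum_{\NN(\fn)=n}\phik(\fn)
- \tfrac{\Res}{2\Zk(2)}\bigl(n^{2} - (n-\delta)^{2}\bigr)
\;\geq\; (n-1) - O(n\delta),
\]
which, upon taking $\delta$ of order $1/n$, exceeds $n/2$ for $n$ large.
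The triangle inequality then yields $\max\{\abs{E(n)},\abs{E(n-\delta)}\} \geq n/4$,
so there is a point $y \in (n-1,n]$ with $\abs{E(y)} \geq y/4 - O(1)$.

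Every rational prime $p$ lies under at least one prime ideal of $\Ok$ of norm
at least $p$, so there exist prime ideals of arbitrarily large norm, yielding a
sequence $y_k\to\infty$ along which $\abs{E(y_k)} \geq y_k/4 - O(1)$. This
establishes $E(x) = \Omega(x)$ and thereby the lemma. I do not foresee a
substantive obstacle: this is the same elementary jump argument used by
Verjovsky in the case $\KK = \QQ$ (cf.\ \cite{V, V2}), and the only arithmetic
ingredients specific to an algebraic number field are the identity
$\phik(\fp) = \NN(\fp) - 1$ and the existence of prime ideals of unbounded norm,
both of which are immediate.
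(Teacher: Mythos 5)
Your proposal is correct, and it rests on the same key mechanism as the paper's proof: $\Phi_{\KK}$ has a jump of size at least $\NN(\fp)-1$ at each prime ideal norm, while the main term $\frac{\Res}{2\Zk(2)}x^2$ is smooth, and prime ideal norms are unbounded. The packaging, however, is genuinely different. The paper argues by contradiction: it assumes $x^{\alpha}\bigl(\Phi_{\KK}(x)/x^{2}-\Res/(2\Zk(2))\bigr)$ is bounded for some $\alpha>1$, forms the bounded combination $L(x)$ from the values at the consecutive points $x$ and $x+1$, and shows $L(x)$ blows up when $x+1$ runs through prime ideal norms. Because that comparison is over an interval of length $1$, the main-term increment ($\approx \frac{\Res}{\Zk(2)}x$) and the jump ($\geq x-1$) are of the same order, and the paper has to invoke an asymptotic for the jump at prime ideal norms to rule out cancellation. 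You instead prove the stronger unconditional statement $E(x)=\Omega(x)$ directly, comparing $E$ at $n-\delta$ and $n$ with $\delta<1$ chosen small (fixed or of order $1/n$), so the main-term increment $O(n\delta)$ is forced below half the jump and no cancellation can occur; the triangle inequality then gives $\abs{E(y)}\geq y/4$ along a sequence, and the lemma follows since $x^{\alpha-2}\abs{E(x)}\geq x^{\alpha-1}/4\to\infty$ for $\alpha>1$. Your route buys a cleaner quantitative conclusion ($\limsup \abs{E(x)}/x>0$) and avoids the delicate point in the unit-interval comparison, at no cost in generality; the only number-field inputs you use, $\phik(\fp)=\NN(\fp)-1$ and the existence of prime ideals of unbounded norm, are indeed immediate.
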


\begin{proof}
 In order that we might derive a contradiction suppose that for some $\alpha > 1$ the result of our lemma fails. Then there is a $c > 0$ and a function $b_{\alpha}(x)$, depending on $\alpha$, such that $\abs{b_{\alpha}(x)} < c$ for $0 < x$ and such that
 $$\frac{\Phi_{\KK}(x)}{x^2}- \frac{\Res}{2 \Zk(2)} =\frac{b_{\alpha}(x)}{x^\alpha}.$$ 
Notice that, for all $x > 0$ we have
 $$\frac{\Phi_{\KK}(x+1)}{(x+1)^2}=\frac{\Phi_{\KK}(x)x^2}{x^2(x+1)^2}+\frac{\phik(\floor{x+1})}{(x+1)^2}, $$
so that we are able to ponder to consecutive values of $b_{\alpha}$ with 
 $$
L(x)=b_{\alpha}(x)\frac{x^2}{(x+1)^2}-b_{\alpha}(x+1)\left(\frac{x }{x+1}\right)^\alpha.
$$ 
Clearly, this is  a bounded expression and
\begin{align}
L(x) &=x^{\alpha}\left(\frac{\Phi_{\KK}(x)}{(x+1)^2}-\frac{\Res x^2}{2\Zk(2) (x+1)^2}\right) \notag \\
&-x^{\alpha}(\frac{\Phi_{\KK}(x)}{(x+1)^2}+\frac{\phik(\floor{x+1})}{(x+1)^2}-\frac{\Res}{2\Zk(2)}) \notag \\
&=x^\alpha \left( \frac{\Res}{2 \Zk(2)}\frac{2x+1}{(x+1)^2}- \frac{\phik(\floor{x+1}) }{(x+1)^2} \right).
\notag
\end{align}
 However, if  $x + 1$ is restricted  to taking only   values of prime integral ideals, so that
\begin{align}
\phik(x + 1) & = \abs{ \{ \fn \subset \Ok \mid  \NN(\fn) \leq x  \}} \notag \\
&=\Res x+O(x^{1-\frac{1}{n}}),  \notag
 \end{align} then $L(x)$ is unbounded. This contradiction completes the
proof of our lemma.
 \end{proof}

 \subsection{Proof of Theorem C}

 The Mellin transform $\M_f(s)$ of $m_q(f)q^{-1}$  is holomorphic for $\Re(s)\geq \frac{1}{2} $ 
except, possibly, for a simple pole at $s=1$ with residue
$$
m(f)=\frac{\Res}{\Zk(2)} \int_{\Rb} q f(q)dq.
$$
From Proposition \ref{convergenciadeM}, the Mellin inversion formula applies, and we have
\begin{equation}\label{MI}
m_q(f)=\frac{1}{2\pi i}\int_{b-i\infty}^{b+i\infty}\mathcal{M}_f(s)q^{1-s}ds, 
\end{equation} for any real number $b>1$. If $f\in C_c^{\ell}(\Rb)$ with $\floor{\frac{n}{2}}+2 \leq \ell \leq \infty$, then by the estimates of Lemma \ref{cotas_principales} and the Cauchy residue theorem,  we  can shift the path of integration in equation (\ref{MI}) to the line $\sigma=\frac{1}{2}$ to get
$$
m_q(f)= m(f) +\frac{1}{2\pi}\int_{-\infty}^{\infty}\mathcal{M}_f(\frac{1}{2}+it)q^{\frac{1}{2}} q^{-it}dt.
$$
Now, because $\mathcal{M}_f(\frac{1}{2}+it)$ is integrable (w.r.t. $dt$), the Riemann--Lebesgue theorem implies
$$
\lim_{q\rightarrow 0}\abs{\int_{-\infty}^{\infty}\mathcal{M}_f(\frac{1}{2}+it)q^{it}dt}=0
$$ 
Therefore,
$$
m_q(f)=m(f)+o(q^{1/2}) \qquad (q \rightarrow 0).
$$

\subsection{Proof of Theorem D}

Suppose that for all  $f\in C_c^{\infty}(\Rb)$, we have the following bound:
$$m_{q}(f)=m(f)+O(q^{3/4-\epsilon}) \qquad(q\rightarrow 0)
$$ for all $0<\epsilon<1/4$
and write $m_q(f)=m(f)+E_f(q)$. Let   $\T$ be sufficiently large such that $m_q(f)=0$ for $q>\T$.  Then,
\begin{align}
\mathcal{M}_{f}(s)&=\int_{0}^{\T}m_{q}(f)q^{s-2}dq \notag\\
&=\int_{0}^{\T}(m(f)+E_f(q))q^{s-2}dq \notag \\
&=\frac{m(f)\T^{s-1}}{s-1}+~\int_{0}^{\T}E_f(q)q^{s-2}dq. \notag
\end{align} 
Since $a(q)=O(q^{\frac{3}{4}-\epsilon})$ as $q \rightarrow 0$, the last integral 
 converges absolutely and uniformly in the half-plane $\Re(s)>\frac{1}{4}+\epsilon$,  it defines a holomorphic function in that half-plane. Therefore, 
$\mathcal{M}_f(s)$ is a holomorphic function in the region $\Re(s)>\frac{1}{4}+\epsilon$ except, possibly, for a pole  at $s=1$ with residue $m(f)$. Thus, the Riemann hypothesis for  the  Dedekind  zeta--function $\Zk(s)$ is true.

Conversely,  suppose the Riemann hypothesis for the Dedekind  zeta function  holds. Then,  $\Zk(2s)$ does not vanish  for $\Re(s)>1/4$ and
$\M_f(s)$ is holomorphic for $\Re(s)>1/4$ except, possibly, for a simple pole at $s=1$ with residue $m(f)$. Moreover, from Lemma \ref{cotas_principales_HR},  the  integral of $\M_f(s)q^{1-s}$ exists over the boundary of the band $\frac{1}{4}+\epsilon \leq \sigma \leq 2$, for all $0<\epsilon< 1/4$. Hence the Mellin inversion formula and the Cauchy's residue theorem  implies
$$
m_q(f)=\mathcal{R}es_{s=1}(\M_f(s))+\frac{1}{2\pi}\int_{-\infty}^{\infty}\M_f(\frac{1}{4}+\epsilon +it)q^{-it}q^{\frac{3}{4}-\epsilon}dt.
$$ Again, by the Riemann--Lebesgue theorem:
$$
m_q(f)=m(f)+o(q^{\frac{3}{4}-\epsilon}),  \qquad (q\rightarrow 0).
$$

\subsection{Proof of Theorems E and F}

For  any integer $r \geq 1$, let  $F_r \in C(\Rb)$  be defined by
$$
F_r(t)=\begin{cases} (1-t)^r& \text{for } \leq 1,\\
      0 & \text{for } t > 1.
     \end{cases}
$$
Then the Mellin transform of $\M_F(s)$ is given by
$$
\M_{F_r}(s)=\frac{\Zk(2s-1)}{\Zk(2s)} \int_{0}^1(1-t)^r t^{2s-1}dt.
$$
Since the last integral is given by the Beta function 
$$B(r+1,2s)=\frac{\Gamma(r+1) \Gamma(2s)}{\Gamma(2s+r+1)},$$
it follows that $$
\M_{F_r}(s)=\frac{\Zk(2s-1)\Gamma(r+1)}{\Zk(2s) \prod_{k=0}^{r} (2s+k)} .
$$
Hence the only poles of $\M_F(s)$  in the half-plane $\Re(s) > 0$ are located at the zeroes of $\Zk(2s)$, since $2s(2s+1)\cdots(2s+r)$ does not vanish in that half--plane. 

Let $F=F_{\floor{\frac{n}{2}}+1}$ be the function given in Theorem (E). Notice that $\M_{F}(\sigma+it) = O(\frac{1}{(1+\abs{t})^{1+\frac{1}{4}}})$, uniformly in the vertical strip $ \frac{1}{2}\leq \sigma \leq 2 $, because $\pk(\sigma+it)=O(t^{\frac{n}{2}+\epsilon})$ uniformly in that vertical strip, for any  $\epsilon>0 $. Then we are able to shift the vertical
line of integration in Mellin's inversion formula to the vertical line $\Re(s) = \frac{1}{2}$, to apply the Riemann--Lebesgue theorem, and to obtain  the error term $E_F=o(q^\frac{1}{2})$. This  proofs the first assertions of theorems (E) and (F).

Now  suppose:
$$\limsup_{q \to 0} q^{-\alpha}\abs{ m_q(F)-m(F)}=\infty \qquad ( \text{for all } \alpha > 1/2) $$
Let $\beta$ be the supremum of the real parts of the zeroes of the Dedekind zeta--function and suppose that $\beta<1$. Here notice that, for sufficiently small $0<\epsilon<\frac{1}{4n}$,  $\pk(\sigma+it)=O( t^{n(\frac{1}{2}-\epsilon)+\epsilon'})$ uniformly in the vertical strip $ \frac{1}{2}-\epsilon \leq \sigma \leq 2 $, for any $\epsilon'>0$, which implies that  $\M_{F}(\sigma+it) = O(\frac{1}{(1+\abs{t})^{1+\frac{1}{4}}})$ uniformly in the vertical strip $ \frac{1}{2}-\epsilon \leq \sigma \leq 2 $, for sufficiently small $0<\epsilon<\frac{1}{4n}$.  Then we are able to shift the vertical
line of integration in Mellin's inversion formula to the vertical line $\Re(s) = \frac{1}{2} - \epsilon$,  to apply the Riemann--Lebesgue theorem, and to improved the  exponent $\alpha $ of the error term $E_F=o(q^\alpha)$  to be $\frac{1}{2}+\epsilon$, for some sufficiently small $0<\epsilon<\frac{1}{4n}$. This contradiction proofs the second assertion of Theorem (E).

In order to prove Theorem (F) consider    the function $F=F_{n}$ given in Theorem (E) and suppose:
$$\limsup_{q \to 0} q^{-\alpha}\abs{ m_q(F)-m(F)}=\infty,  $$
 for all  $\alpha > 1/2+\theta$ with $0 \leq \theta < \frac{1}{4}$.  Let $0 \leq \theta < \frac{1}{4}$ be fixed and suppose that  the supremum of the real parts of the zeroes of the Dedekind zeta--function satisfies $\beta<1-2\theta$. In this case, for sufficiently small $0<\epsilon<\frac{1}{4n}$, $\pk(\sigma+it)=O(t^{n})$ uniformly in the vertical strip $ \frac{1}{2}-\theta-\epsilon \leq \sigma \leq 2 $, which implies that  $\M_{F}(\sigma+it) = O(\frac{1}{(1+\abs{t})^{1+\frac{1}{2}}}),$ uniformly in the vertical strip $ \frac{1}{2}-\theta-\epsilon \leq \sigma \leq 2 $, for  sufficiently small $0<\epsilon<\frac{1}{4}-\theta$. Hence we are able to shift the vertical
line of integration in Mellin's inversion formula to the vertical line $\Re(s) = \frac{1}{2}-\theta - \epsilon$, for any $0<\epsilon<1/4-\theta$,   to apply the Riemann--Lebesgue theorem, and  to improved the exponent  $\alpha $ of $q$ in the error term $E_F=o(q^\alpha)$  to be $\frac{1}{2}+\theta +\epsilon$, for some sufficiently small $0<\epsilon<1/4-\theta$. This contradiction proofs the second assertion of Theorem (F).

\section*{Acknowledgments} 
The author would like to  thank A. Verjovsky and M. Cruz--L\'opez for  suggesting and  encouraging  the research of this remarkable subject.


\begin{thebibliography}{10}

\bibitem{KB}
K.~Broughan.
\newblock {\em Equivalents of the {R}iemann {H}ypothesis. Volume 2. Analytic
  Equivalents}.
\newblock Cambridge University Press, 2017.

\bibitem{CP}
P.~Codeca.
\newblock A note on {E}uler's phi--function.
\newblock {\em Ark. Mat.}, 19.

\bibitem{Sam}
S.~Estala-Arias.
\newblock Distribution of cusp sections in the {H}ilbert modular orbifold.
\newblock {\em Journal of Number Theory}, 155:202--225, 2015.

\bibitem{Fr}
J.~Franel.
\newblock Les suites de {F}arey et le probl\'eme des nombres premiers.
\newblock {\em G{\"o}ttinger Nachrichten}, pages 198--201, 1924.

\bibitem{EL}
E.~Landau.
\newblock Bemerkung zu der vorstehenden {A}rbeit von {H}errn {F}ranel.
\newblock {\em G{\"o}ttinger Nachrichten}, pages 202--206, 1924.

\bibitem{SL}
S.~Lang.
\newblock {\em Algebraic Number Theory}.
\newblock Springer-Verlag, Berlin-New York, 1994.

\bibitem{lit}
J.E. Littlewood.
\newblock Quelques cons\'equences de l'hypoth\`ese que la fonction $\zeta(s)$
  de {Ri}emann n'a pas de z\'eros dans le demi--plan $r(s) > 1/2$.
\newblock {\em C.R. Acad. Sci. Paris}, pages 263--266, 1912.

\bibitem{Mer}
F.~Mertens.
\newblock {\"U}eber einige asymptotische {G}esetze der {Z}ahlentheorie.
\newblock {\em Journal f{\"u}r die reine und angewandte Mathematik}, pages
  289--339, 1873.

\bibitem{MO}
M.~Ram Murty and Jeanine~Van Order.
\newblock Counting integral ideals in a number field.
\newblock {\em Expositiones Mathematicae}, pages 53--66, 2007.

\bibitem{Nar}
W.~Narkiewicz.
\newblock {\em Elementary and analytic theory of algebraic numbers}.
\newblock Springer Verlag, Polish Scien. Pub., 1990.

\bibitem{JN}
J.~Neukirch.
\newblock {\em Algebraic Number Theory}.
\newblock Springer-Verlag, Berlin-New York, 1999.

\bibitem{NV}
F.~Nicolae and A.~Verjovsky.
\newblock Discrete {S}chwartz distributions and the {R}iemann zeta--function.
\newblock {\em Bulletin of the Brazilinan Mathematical Society, New Series},
  41.

\bibitem{ParkPau}
Paulin~F. Parkkonen~J.
\newblock A {S}urvey of {S}ome {A}rithmetic {A}pplications of {E}rgodic
  {T}heory in {N}egative {C}urvature.
\newblock In Hasselblatt B., editor, {\em Ergodic Theory and Negative
  Curvature}. Springer.

\bibitem{Sa}
P.~Sarnak.
\newblock Asymptotic behavior of periodic orbits of the horocycle flow and
  {E}isenstein series.
\newblock {\em Comm. on Pure and Applied Math.}, pages 719--739, 1981.

\bibitem{Sit}
Brian~D. Sittinger.
\newblock The probability that random algebraic integers are relatively
  $r$-prime.
\newblock {\em Journal of Number Theory}, pages 164--171, 2010.

\bibitem{Tak}
Wataru Takeda.
\newblock Visible lattice points and the {E}xtended {L}indel{\"o}f
  {H}ypothesis.
\newblock {\em Journal of Number Theory}, pages 297--309, 2017.

\bibitem{Tit}
E.C. Titchmarsh.
\newblock {\em The theory of the Riemann zeta-function}.
\newblock Oxford Univ. Press, London, 1988.

\bibitem{V}
A.~Verjovsky.
\newblock Arithmetic, geometry and dynamics in the modular orbifold.
\newblock {\em Dynamical Systems, (Santiago de Chile 1990)(Pitman Series 285),
  R. Bamon, R. Labarca. J. Lewowicz, J. Palis, Longman, Essex, UK}, pages
  263--298, 1992.

\bibitem{V2}
A.~Verjovsky.
\newblock Discrete measures and the {R}iemann hypothesis.
\newblock {\em Kodai Math, J.}, pages 596--608, 1994.

\bibitem{DZ}
D.~Zagier.
\newblock Eisenstein series and the {R}iemann zeta function.
\newblock {\em Automorphic forms, Representation theory and Arithmetic, Tata
  Institute of Fundamental Research, Bombay}, 1979.

\end{thebibliography}
\end{document}